\documentclass{amsart}
%

\usepackage{mathrsfs,amssymb,multirow,setspace}
\usepackage[margin=2cm]{geometry}
\topmargin = 10pt
\theoremstyle{plain}
\newtheorem{theorem}{Theorem}[section]
\newtheorem{lemma}[theorem]{Lemma}
\newtheorem{proposition}[theorem]{Proposition}
\newtheorem{corollary}[theorem]{Corollary}
\newtheorem{conjecture}[theorem]{Conjecture}
\theoremstyle{definition}
\newtheorem{notation}[theorem]{Notations}

\newtheorem{definition}[theorem]{Definition}

\theoremstyle{remark}

\input xy
\xyoption{all}

\def\G{\Gamma}
\def\vt{\vartheta}

\begin{document}


\title[Rank Properties of ${\rm End}(B_n)$]{Rank Properties of the Semigroup of Endomorphisms over Brandt semigroup}
\author[Jitender Kumar]{Jitender Kumar}
\address{Department of Mathematics, Birla Institute of Technology and Science Pilani, Pilani, India}
\email{jitenderarora09@gmail.com}


\begin{abstract}
Howie and Ribeiro \cite{a.Howie99,a.Howie00} introduced various ranks, viz. small rank, lower rank, intermediate rank, upper rank and the large rank of a finite semigroup. In this note, we investigate all these ranks of the semigroup of endomorphisms over Brandt semigroup.
\end{abstract}

\subjclass[2010]{20M10}

\keywords{Ranks of a semigroup, Endomorphisms, Brandt semigroup}

\maketitle


\section*{Introduction and Preliminaries}

Since the work of Marczewski \cite{a.marczewski66}, many authors have studied the rank properties in the context of general algebras (cf. \cite{a.camron02,a.howie87,a.howie92,a.jk16-2,t.jdm02,a.ruskuc94,a.ping11}). The concept of rank for general algebras is analogous to the concept of dimension in linear algebra. The dimension of a vector space is the maximum cardinality of an independent subset, or equivalently, it is the minimum cardinality of a generating set of the vector space.
A subset $U$ of a semigroup $\G$ is said to be \emph{independent} if every element of $U$ is not in the subsemigroup generated by the remaining elements of $U$, i.e. \[ \forall a \in U, \; a \notin \langle U \setminus \{a\} \rangle .\] It can be observed that the minimum size of a generating set need not be equal to the maximum size of an  independent set in a semigroup. Accordingly, Howie and Ribeiro have considered various concepts of ranks for a finite semigroup $\G$ (cf. \cite{a.Howie99,a.Howie00}).
\begin{enumerate}
\item $r_1(\G) = \max\{k :$ every subset $U$ of cardinality $k$ in $\G$ is independent\}.
\item $r_2(\G) = \min\{|U| : U \subseteq \G, \langle U\rangle = \G\}$.
\item $r_3(\G) = \max\{|U| : U \subseteq \G, \langle U\rangle = \G, U$ is independent\}.
\item $r_4(\G) = \max\{|U| : U \subseteq \G, U$  is independent\}.
\item $r_5(\G) = \min\{k :$ every subset $U$ of cardinality $k$ in $\G$ generates $\G$\}.
\end{enumerate}
It can be observed that \[r_1(\G) \le r_2(\G) \le r_3(\G) \le r_4(\G) \le r_5(\G).\] Thus,
$r_1(\G), r_2(\G), r_3(\G), r_4(\G)$ and $r_5(\G)$ are, respectively, known as \emph{small rank}, \emph{lower rank}, \emph{intermediate rank}, \emph{upper rank} and \emph{large rank} of $\G$. In this note, we obtain all these ranks except upper rank of the semigroup ${\rm End}(B_n)$ (with respect to composition of mappings) of endomorphisms over a Brandt semigroup. A lower bound of $r_4({\rm End}(B_n))$ is obtained. First, we recall the necessary background material and fix our notation. It is well known that the set $\{(1 \; 2), (1\; 2 \cdots n)\}$ of permutations is a generating of the minimum cardinality in $S_n$, the symmetric group of degree $n$. Hence, for $n \ge 3$, we have $r_2(S_n) = 2$. It is immediately evident that the set of
transpositions $\{(1 \; 2), (2 \; 3), \ldots,(n-1 \; n)\}$ forms an independent generating set of $S_n$ so that $r_3(S_n) \ge n-1$. Further, the intermediate rank and the upper rank of the symmetric group $S_n$ is obtained in the following theorem.

\begin{theorem}[{\cite[Theorem 1]{a.whiston00}}]\label{int. n upper S_n}
Given an independent set inside $S_n$, the size of the set is at
most $n - 1$, with equality only if the set generates the whole group $S_n$. Hence, For $n \ge 2$, we have $r_3(S_n) = r_4(S_n) = n - 1.$
\end{theorem}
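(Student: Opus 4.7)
The plan is to prove the stronger statement by strong induction on $n$: if $U\subseteq S_n$ is independent, then $|U|\le n-1$, and $|U|=n-1$ forces $\langle U\rangle = S_n$. The statement about $r_3$ and $r_4$ then follows at once, since the transposition set $\{(1\;2),(2\;3),\ldots,(n-1\;n)\}$ witnesses the lower bound $r_3(S_n)\ge n-1$.

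Base cases $n=2,3$ are verified directly by listing the proper subgroups of $S_2,S_3$ and their independent subsets. For the inductive step, fix $U\subseteq S_n$ independent with $|U|=k$, and set $H=\langle U\rangle$. The strategy is a trichotomy on the permutation action of $H$ on $\{1,\ldots,n\}$:

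\emph{Intransitive case.} Suppose the orbits of $H$ have sizes $n_1,\ldots,n_r$ with $r\ge 2$ and each $n_i<n$. Then $H$ embeds in $S_{n_1}\times\cdots\times S_{n_r}$ via the coordinate projections $\pi_i$. The key lemma I would prove first is: if $U$ is independent and $\pi$ is a homomorphism, then one can partition $U = U_0\sqcup U_1$ so that $\pi(U_1)$ is independent in $\pi(H)$ and $|U_0|\le$ the rank of $\ker(\pi)\cap H$ as an independent subset. Applying this iteratively to the coordinate projections and using the inductive hypothesis $|\pi_i(U)|\le n_i-1$ yields $k\le\sum(n_i-1)=n-r\le n-2$, which is strictly better than $n-1$ and so takes care of this case comfortably.

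\emph{Imprimitive transitive case.} If $H$ preserves a nontrivial block system with $b$ blocks of size $d$ (so $bd=n$, $1<b,d<n$), then $H$ embeds into the wreath product $S_d\wr S_b$. The block action gives a surjection $H\to K\le S_b$ and kernel contained in $S_d^{\,b}$. Combining the independence-under-homomorphism lemma with the inductive bounds on $S_d$ and $S_b$ yields $k\le b(d-1)+(b-1)=n-1$, with equality forcing $H=S_d\wr S_b\ne S_n$ (for $n\ge 4$), contradicting any attempt to reach $k=n-1$ with $\langle U\rangle\ne S_n$.

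\emph{Primitive proper case.} Here $H$ is a primitive proper subgroup of $S_n$. This is the main obstacle: one needs an upper bound of the form $|H|\le c^n$ with $c$ small, or equivalently a bound on the length of any chain of subgroups in $H$. I would invoke Bochert's bound $[S_n:H]\ge\lfloor(n+1)/2\rfloor!$ for primitive $H$ not containing $A_n$, which implies $\log_2|H|\le n\log_2 n - c\,n\log n$ asymptotically; since an independent set of size $k$ in $H$ yields a strictly ascending chain of length $k$ so $k\le\log_2|H|$, the inequality $k\le n-1$ follows for $n$ large, and the finitely many small cases are checked by hand using the known list of primitive groups of small degree.

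Finally, to obtain the equality clause, observe that in each case equality $k=n-1$ was ruled out unless $H=S_n$. Combining the three cases closes the induction, and the theorem, in particular the equalities $r_3(S_n)=r_4(S_n)=n-1$ for $n\ge 2$, follows.
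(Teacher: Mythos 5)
First, a point of order: the paper does not prove this statement at all --- it is quoted verbatim as \cite[Theorem~1]{a.whiston00}, so there is no in-paper argument to compare yours against, and what you are attempting is a from-scratch proof of Whiston's theorem. Your outline does shadow the skeleton of Whiston's published proof: induction on $n$, the trichotomy intransitive/imprimitive/primitive, and a subadditivity lemma of the shape $i(G)\le i(N)+i(G/N)$ for $N\trianglelefteq G$ are exactly his tools, and the intransitive count $k\le\sum(n_i-1)=n-r\le n-2$ is correct. Two caveats already at this stage: your ``key lemma'' needs the elements of $U_0$ to be \emph{replaced} by suitable elements of the kernel (multiplying by words in $U_1$) before any independence claim inside $\ker\pi\cap H$ makes sense, and the equality clause is not free --- equality in $k\le b(d-1)+(b-1)$ does not by itself force $H=S_d\wr S_b$; Whiston has to track where each inequality is tight to extract the ``only if the set generates $S_n$'' conclusion.

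The genuine gap is the primitive case. The chain inequality $k\le\ell(H)\le\log_2|H|$ is fine, but to deduce $k\le n-1$ from it you need $|H|\le 2^{\,n-1}$. Bochert's bound gives only $|H|\le n!/\lfloor(n+1)/2\rfloor!$, whose logarithm is asymptotically $\tfrac{n}{2}\log_2 n$; indeed your own displayed estimate ``$n\log_2 n-c\,n\log n$'' is still of order $n\log n$, which exceeds $n-1$ by a factor of $\log n$, so the assertion that ``$k\le n-1$ follows for $n$ large'' is false as written --- the elementary bound is too weak by an unbounded factor. Closing this case requires order bounds for primitive groups not containing $A_n$ of an entirely different strength, namely the CFSG-based theorems of Cameron (or, later, Mar\'{o}ti's bound of roughly $n^{1+\sqrt{n}}$), whose logarithm is $O(\sqrt{n}\log n)=o(n)$. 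This is precisely why Whiston's result is a Classification-dependent theorem; the route you propose cannot recover it by elementary means, and you would need to either import those bounds explicitly or simply cite \cite{a.whiston00} as the paper does.
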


\begin{definition}\label{d.Bn}
For any integer $n \ge 1$, let $[n] = \{1, 2, \ldots, n\}$. The semigroup $(B_n, +)$, where $B_n = ([n]\times[n])\cup \{\vartheta\}$ and the operation $+$ is given by
\[ (i,j) + (k,l) =
                \left\{\begin{array}{cl}
                (i,l) & \text {if $j = k$;}  \\
                \vartheta     & \text {if $j \neq k $}
                  \end{array}\right.  \]
and, for all $\alpha \in B_n$, $\alpha + \vartheta = \vartheta + \alpha = \vartheta$, is known as \emph{Brandt semigroup}. Note that $\vartheta$ is the (two sided) zero element in $B_n$. For more details on Brandt semigroup, we refer the reader to \cite{b.Howie95}.
\end{definition}

\begin{notation}

(1) The set of idempotent elements of $B_n$ is denoted by $I(B_n)$. Note that $I(B_n) = \{(i, i): i \in [n]\} \cup \{\vartheta\}.$

(2) For $\alpha \in B_n$, $\xi_\alpha$ denotes the constant map on $B_n$ that sends all the elements of $B_n$ to $\alpha$ i.e. $x \xi_\alpha  = \alpha \;\;\forall x \in B_n$. The set of constant maps on $B_n$ with idempotent value is denoted by $\mathcal{C}_{I(B_n)}$ i.e. $\mathcal{C}_{I(B_n)} = \{\xi_\alpha: \alpha \in I(B_n)\}.$

(3) The set of automorphisms and endomorphisms on $B_n$ is denoted by ${\rm Aut}(B_n)$ and ${\rm End}(B_n)$, respectively.
\end{notation}

Now, we recall the results concerning to the elements of the semigroup ${\rm End}(B_n)$ from \cite{a.jk14-1}.

\begin{proposition}[{\cite[Proposition 2.1]{a.jk14-1}}]\label{sn-iso-autbn}
The assignment $\sigma \mapsto \phi_\sigma: S_n \rightarrow {\rm Aut}(B_n)$ is an isomorphism, where the mapping $\phi_\sigma: B_n \rightarrow B_n$ is given by, $\forall i, j \in [n]$, \[(i, j)\phi_\sigma = (i\sigma, j\sigma)\; \mbox{ and }\; \vt\phi_\sigma = \vt.\]
\end{proposition}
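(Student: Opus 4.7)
The plan is to verify that the assignment $\sigma \mapsto \phi_\sigma$ is a well-defined group homomorphism that is both injective and surjective. I will handle these in a natural order, leaving the surjectivity (characterizing every automorphism) for the end, as that is the step that carries real content.

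First I would check that each $\phi_\sigma$ is indeed an element of ${\rm Aut}(B_n)$. The map is plainly well-defined because $\sigma \in S_n$ permutes $[n]$. To see $\phi_\sigma$ respects the operation, I would split into cases on the definition of $+$ in $B_n$: if $(i,j)+(k,l)=(i,l)$ because $j=k$, then in the image we also have $j\sigma = k\sigma$, so $(i\sigma,j\sigma)+(k\sigma,l\sigma)=(i\sigma,l\sigma)$; if $j \neq k$, then $j\sigma \neq k\sigma$ and both sides equal $\vt$; and the cases involving $\vt$ are immediate since $\vt\phi_\sigma = \vt$ and $\vt$ is absorbing. Bijectivity of $\phi_\sigma$ follows because $\phi_{\sigma^{-1}}$ is plainly its two-sided inverse. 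Thus $\phi_\sigma \in {\rm Aut}(B_n)$.

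Next I would verify that the correspondence is a homomorphism: a direct coordinatewise computation gives $(i,j)\phi_{\sigma\tau} = (i(\sigma\tau), j(\sigma\tau)) = (i\sigma,j\sigma)\phi_\tau = \bigl((i,j)\phi_\sigma\bigr)\phi_\tau$, and similarly on $\vt$, so $\phi_{\sigma\tau} = \phi_\sigma\phi_\tau$. Injectivity is equally routine: if $\phi_\sigma = \phi_\tau$, comparing the images of each $(i,i)$ forces $i\sigma = i\tau$ for all $i$, so $\sigma = \tau$.

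The main obstacle, and the only step that requires real structural information about $B_n$, is surjectivity: every $\psi \in {\rm End}(B_n)$ that is bijective must equal some $\phi_\sigma$. My plan here is to exploit the distinguished role of $\vt$ and of the diagonal idempotents. Since $\vt$ is the unique zero of $B_n$ and an automorphism must send the zero to the zero, we get $\vt\psi = \vt$. An automorphism permutes $I(B_n)\setminus\{\vt\} = \{(i,i) : i\in[n]\}$, and I would define $\sigma \in S_n$ by the rule $(i,i)\psi = (i\sigma,i\sigma)$. It remains to match $\psi$ with $\phi_\sigma$ off the diagonal. For arbitrary $i,j \in [n]$, I would use the identities $(i,i)+(i,j) = (i,j)$ and $(i,j)+(j,j) = (i,j)$ in $B_n$. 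Writing $(i,j)\psi = (a,b)$ (it cannot be $\vt$ because $\psi$ is a bijection and $\vt\psi = \vt$), applying $\psi$ to these identities yields $(i\sigma,i\sigma)+(a,b) = (a,b)$ and $(a,b)+(j\sigma,j\sigma) = (a,b)$, which force $a = i\sigma$ and $b = j\sigma$ by the definition of $+$. Hence $(i,j)\psi = (i\sigma,j\sigma) = (i,j)\phi_\sigma$, so $\psi = \phi_\sigma$, completing the proof.
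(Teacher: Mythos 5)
Your proof is correct and complete. Note that the paper itself does not prove this proposition---it is quoted verbatim from \cite[Proposition 2.1]{a.jk14-1}---so there is no in-text argument to compare against; but your verification (homomorphism and bijectivity of each $\phi_\sigma$, the identity $\phi_{\sigma\tau}=\phi_\sigma\phi_\tau$, injectivity via the diagonal idempotents, and surjectivity by showing an automorphism fixes $\vt$, permutes the idempotents $(i,i)$, and is then pinned down off the diagonal by $(i,i)+(i,j)=(i,j)=(i,j)+(j,j)$) is exactly the standard argument and fills in every step that matters.
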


\begin{theorem}[{\cite[Theorem 2.2]{a.jk14-1}}]\label{End(B_n)}
For $n \ge 1$, we have ${\rm End}(B_n) = {\rm Aut}(B_n) \cup \mathcal{C}_{I(B_n)}$.
\end{theorem}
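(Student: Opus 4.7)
The plan is to take an arbitrary $\psi \in {\rm End}(B_n)$ and pin down its behavior by first analyzing $\vartheta\psi$. Since $\vartheta + \vartheta = \vartheta$, applying $\psi$ shows that $\vartheta\psi$ is an idempotent, so $\vartheta\psi \in I(B_n)$. From here I would branch on whether $\vartheta\psi = \vartheta$ or not, with the only tools being the multiplication table in Definition~\ref{d.Bn} and a handful of relations, most importantly $(i,i)+(i,j) = (i,j) = (i,j)+(j,j)$, $(i,j)+(j,i) = (i,i)$, and $(i,i)+(j,j) = \vartheta$ for $i \ne j$.

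Suppose first that $\vartheta\psi = (i,i)$ for some $i \in [n]$. For every $\alpha \in B_n$, the equality $\alpha + \vartheta = \vartheta$ in $B_n$ forces $\alpha\psi + (i,i) = (i,i)$, and a direct check on the two possible forms of $\alpha\psi$ (either $\vartheta$ or some $(a,b)$) immediately produces $\alpha\psi = (i,i)$. Hence $\psi = \xi_{(i,i)} \in \mathcal{C}_{I(B_n)}$. This is the short half.

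In the complementary case $\vartheta\psi = \vartheta$, I would first examine the diagonal idempotents. Each $(i,i)\psi$ lies in $I(B_n)$, and the orthogonality relation $(i,i)+(j,j) = \vartheta$ combined with $\vartheta\psi = \vartheta$ shows the non-zero values among the $(i,i)\psi$ are pairwise distinct. If some $(i_0,i_0)\psi = \vartheta$, then using $(i,i_0)+(i_0,i) = (i,i)$ one propagates $(i,i)\psi = \vartheta$ to every diagonal element, and then $(i,i)+(i,j) = (i,j)$ collapses all of $B_n$ to $\vartheta$, giving $\psi = \xi_\vartheta \in \mathcal{C}_{I(B_n)}$. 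Otherwise the map $i \mapsto \sigma(i)$ defined by $(i,i)\psi = (\sigma(i),\sigma(i))$ is a permutation in $S_n$, and for $i \ne j$ the left and right absorption identities $(i,i)+(i,j)=(i,j)=(i,j)+(j,j)$ force the two coordinates of $(i,j)\psi$ to be $\sigma(i)$ and $\sigma(j)$ respectively (provided $(i,j)\psi \ne \vartheta$), while $(i,j)+(j,i)=(i,i)$ together with $(\sigma(i),\sigma(i)) \ne \vartheta$ rules out the value $\vartheta$. This yields $\psi = \phi_\sigma \in {\rm Aut}(B_n)$ via Proposition~\ref{sn-iso-autbn}.

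The only delicate point I expect is the propagation sub-case where $\vartheta\psi = \vartheta$ yet some diagonal idempotent is sent to $\vartheta$: it needs the full ladder through the off-diagonal elements rather than a one-line deduction. Everything else is a routine case analysis on the multiplication table, and the argument never needs any structural theorem beyond Proposition~\ref{sn-iso-autbn} to recognize automorphisms.
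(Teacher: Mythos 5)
The paper does not actually prove this statement; it is imported verbatim from \cite[Theorem 2.2]{a.jk14-1}, so there is no in-text proof to compare against. Your case analysis on $\vartheta\psi$ is correct and is essentially the standard direct verification; the only things worth adding are the one-line converse inclusion (the theorem asserts an equality, so you should note that $\phi_\sigma$ and each $\xi_e$ with $e$ idempotent are indeed endomorphisms) and, in the sub-case where some $(i_0,i_0)\psi=\vartheta$, the explicit intermediate step $(i,i_0)\psi=(i,i_0)\psi+(i_0,i_0)\psi=(i,i_0)\psi+\vartheta=\vartheta$, which must precede the use of $(i,i_0)+(i_0,i)=(i,i)$ to kill the diagonal --- a point you already flag yourself.
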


In what follows, ${\rm End}(B_n)$ denotes the semigroup $({\rm End}(B_n), \circ)$. Further, the composition of mappings $f$ and $g$ will simply be denoted by $fg$.

\section{Main results}
In this section, we investigate all the ranks of the semigroup ${\rm End}(B_n)$. It can be easily observed that ${\rm End}(B_1)$ is an independent set and none of its proper subsets generate ${\rm End}(B_1)$. Hence, for $1 \le i \le 5$, we have \[r_i({\rm End}(B_1))  = |{\rm End}(B_1)| = 3.\] In the rest of the section, we shall investigate the ranks of the semigroup ${\rm End}(B_n)$, for $n > 1$. The small rank of ${\rm End}(B_n)$ comes as a consequence of the following result due to Howie and Ribeiro.

\begin{theorem}[{\cite[Theorem 2]{a.Howie00}}]\label{r1-gm}
Let $\Gamma$ be a finite semigroup, with $|\Gamma| \ge 2$. If $\Gamma$ is not a band, then $r_1(\Gamma) = 1$.
\end{theorem}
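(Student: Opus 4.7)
The plan is to prove equality by squeezing $r_1(\Gamma)$ between $1$ and $1$. The lower bound $r_1(\Gamma) \ge 1$ should come essentially for free from the definition, while the upper bound $r_1(\Gamma) \le 1$ should be witnessed by a single cleverly chosen two-element subset exploiting the hypothesis that $\Gamma$ is not a band.

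For the lower bound, I would argue that every singleton $\{a\} \subseteq \Gamma$ is independent: independence of $\{a\}$ requires $a \notin \langle \emptyset \rangle$, and since the empty set generates the empty subsemigroup, this holds trivially. Hence by the definition of $r_1$, whenever $|\Gamma| \ge 1$ we already have $r_1(\Gamma) \ge 1$.

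For the upper bound, I would use the failure of the band law to produce a non-independent pair. Since $\Gamma$ is not a band, there exists $a \in \Gamma$ with $a^2 \ne a$. Consider the set $U = \{a, a^2\}$; it has cardinality $2$, and $a^2 \in \langle \{a\} \rangle = \langle U \setminus \{a^2\}\rangle$, so $U$ is not independent. By definition of $r_1$, this rules out $r_1(\Gamma) \ge 2$, giving $r_1(\Gamma) \le 1$.

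Combining the two inequalities yields $r_1(\Gamma) = 1$, as required. The argument is almost entirely definitional; the only real content is the observation that the assumption ``not a band'' is exactly what is needed to guarantee an element $a$ with $a \neq a^2$, which in turn supplies the witnessing two-element dependent set. There is no serious obstacle here — the point of the theorem is really to package this simple observation so that later applications (such as ours to ${\rm End}(B_n)$ for $n > 1$, which is clearly not a band since it contains non-idempotent automorphisms as soon as $n \ge 2$) become one-line deductions.
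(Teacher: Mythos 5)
Your proof is correct and is essentially the standard argument: the paper itself gives no proof of this statement (it is quoted from Howie and Ribeiro), and the published proof likewise rests on the observation that a non-idempotent $a$ yields the dependent pair $\{a, a^{2}\}$. The only step worth making explicit is why excluding $k=2$ already forces $r_1(\Gamma)\le 1$: since every subset of an independent set is independent, the set of admissible $k$ in the definition of $r_1$ is an initial segment $\{1,\ldots,r_1(\Gamma)\}$, so one dependent $2$-element set suffices.
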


Owing to the fact that ${\rm End}(B_n)$ (for $n \ge 2$) has some non idempotent elements, it is not a band. For instance, the automorphism $\phi_\sigma$ (cf. Proposition \ref{sn-iso-autbn}), where $\sigma$ is a transposition in $S_n$,  is not an idempotent element in ${\rm End}(B_n)$. Hence, we have the following corollary of Theorem \ref{r1-gm}.

\begin{corollary}
For $n \ge 2$, $r_1({\rm End}(B_n)) = 1$.
\end{corollary}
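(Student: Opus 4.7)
The plan is to invoke Theorem \ref{r1-gm} with $\Gamma = {\rm End}(B_n)$. That theorem has two hypotheses to verify in turn: that $|\Gamma| \ge 2$, and that $\Gamma$ is not a band.

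The size hypothesis is immediate. For $n \ge 2$, Theorem \ref{End(B_n)} gives ${\rm End}(B_n) \supseteq {\rm Aut}(B_n)$, and by Proposition \ref{sn-iso-autbn}, $|{\rm Aut}(B_n)| = |S_n| = n! \ge 2$.

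The slightly more substantive step is to exhibit a non-idempotent element of ${\rm End}(B_n)$, which is precisely the example flagged in the paragraph preceding the corollary. I would take any transposition $\sigma \in S_n$ (which exists because $n \ge 2$) and consider the associated automorphism $\phi_\sigma \in {\rm Aut}(B_n) \subseteq {\rm End}(B_n)$ from Proposition \ref{sn-iso-autbn}. Since $\sigma \mapsto \phi_\sigma$ is an isomorphism of semigroups, composition satisfies $\phi_\sigma \phi_\sigma = \phi_{\sigma^2} = \phi_{\mathrm{id}}$, which is the identity map on $B_n$ and therefore distinct from $\phi_\sigma$. Hence $\phi_\sigma$ is not idempotent, and ${\rm End}(B_n)$ fails to be a band.

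With both hypotheses satisfied, Theorem \ref{r1-gm} immediately delivers $r_1({\rm End}(B_n)) = 1$. There is no genuine obstacle here; the corollary is a one-line consequence of the quoted theorem, and the only real content is producing the concrete witness $\phi_\sigma$ showing that ${\rm End}(B_n)$ is not a band.
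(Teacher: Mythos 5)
Your proposal is correct and follows exactly the same route as the paper: the paper also applies Theorem \ref{r1-gm} after noting that $\phi_\sigma$, for $\sigma$ a transposition, is a non-idempotent element of ${\rm End}(B_n)$, so the semigroup is not a band. You merely spell out the details (the cardinality check and the computation $\phi_\sigma\phi_\sigma = \phi_{\mathrm{id}} \ne \phi_\sigma$) that the paper leaves implicit.
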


In order to determine the lower rank of the semigroup ${\rm End}(B_n)$, the following lemma is useful in the sequel.

\begin{lemma}\label{gen-set-prop}
For $1 \le i \le k$, let $f, g_i \in {\rm End}(B_n)$ such that $f = g_1g_2\cdots g_k$. Then
\begin{enumerate}
\item $f \in {\rm Aut}(B_n)$ if and only if $g_i \in {\rm Aut}(B_n)$ for all i.
\item If $f= \xi_\vt$, then $g_j = \xi_\vt$ for some $j$.
\item $f \in \mathcal{C}_{I(B_n)} \setminus \{\xi_\vt\}$ if and only if  $g_j \in \mathcal{C}_{I(B_n)} \setminus \{\xi_\vt\}$ for some $j$.
\end{enumerate}
\end{lemma}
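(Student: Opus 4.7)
The plan is to exploit the structural decomposition ${\rm End}(B_n) = {\rm Aut}(B_n) \cup \mathcal{C}_{I(B_n)}$ from Theorem \ref{End(B_n)} together with two absorption identities for composition with a constant map. Specifically, for any $h \in {\rm End}(B_n)$ and any $\xi_\alpha \in \mathcal{C}_{I(B_n)}$, the equalities $h \cdot \xi_\alpha = \xi_\alpha$ and $\xi_\alpha \cdot h = \xi_{\alpha h}$ follow directly from the definition of a constant map. I would also record once and for all the fact that every automorphism $\phi_\sigma$ fixes $\vt$ and permutes the non-$\vt$ idempotents $\{(i,i) : i \in [n]\}$ among themselves (immediate from Proposition \ref{sn-iso-autbn}); in particular, for $\alpha \in I(B_n)$, one has $\alpha \phi_\sigma = \vt$ if and only if $\alpha = \vt$.

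For part (1), the ($\Leftarrow$) direction is immediate because ${\rm Aut}(B_n) \cong S_n$ is a group and hence closed under composition. For ($\Rightarrow$), I would prove the contrapositive: if some $g_j \in \mathcal{C}_{I(B_n)}$, say $g_j = \xi_\beta$, then iterating $h \cdot \xi_\alpha = \xi_\alpha$ collapses the prefix to $g_1 g_2 \cdots g_j = \xi_\beta$, and iterating $\xi_\alpha \cdot h = \xi_{\alpha h}$ on the suffix yields $f = \xi_{\beta g_{j+1} \cdots g_k}$, a constant map and therefore not an automorphism (since it fails to be injective on $B_n$ for $n \ge 2$).

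For parts (2) and (3), I would apply (1) to conclude that some $g_j$ is a constant map, and then reduce to a single critical index: let $m$ be the \emph{largest} index with $g_m \in \mathcal{C}_{I(B_n)}$, and write $g_m = \xi_\beta$. By the same two absorption identities, $g_1 g_2 \cdots g_m = \xi_\beta$, and since every subsequent $g_{m+1}, \ldots, g_k$ is an automorphism, we obtain
\[
f \;=\; \xi_\beta \cdot g_{m+1} \cdots g_k \;=\; \xi_{\beta \, g_{m+1} \cdots g_k}.
\]
The value of $f$ is therefore the image of $\beta$ under a composition of automorphisms of $B_n$. By the observation closing the first paragraph, this image equals $\vt$ precisely when $\beta = \vt$ and is a non-$\vt$ idempotent precisely when $\beta \ne \vt$. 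This simultaneously gives (2) (with the witness $j = m$ and $\beta = \vt$) and both directions of (3) (again with $j = m$ and $\beta \ne \vt$).

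I do not anticipate any genuinely hard step; the main point of care is to fix the witness $j$ as the \emph{last} constant index $m$ rather than an arbitrary one, so that the trailing automorphism block respects the $\vt$ versus non-$\vt$ dichotomy on $I(B_n)$. Everything else reduces to the two one-line absorption identities and the elementary action of ${\rm Aut}(B_n)$ on $I(B_n)$.
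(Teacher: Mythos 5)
Your absorption identities $h\xi_\alpha=\xi_\alpha$ and $\xi_\alpha h=\xi_{\alpha h}$ are correct for the paper's convention $x(fg)=(xf)g$, and together with the fact that automorphisms fix $\vt$ and permute the non-$\vt$ idempotents they give clean proofs of (1), of (2), and of the forward implication of (3). In substance this is the same argument as the paper's, which uses the same decomposition ${\rm End}(B_n)={\rm Aut}(B_n)\cup\mathcal{C}_{I(B_n)}$ but runs it as a case analysis by contradiction rather than through your explicit formula $f=\xi_{\beta\,g_{m+1}\cdots g_k}$ for $m$ the last constant index.

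The gap is in your claim that the single witness $j=m$ yields \emph{both} directions of (3). For the converse you are given only that \emph{some} $g_j$ lies in $\mathcal{C}_{I(B_n)}\setminus\{\xi_\vt\}$; nothing forces that index to be the last constant index, so you cannot conclude $\beta\ne\vt$. Your own formula in fact exhibits a counterexample to the converse as literally stated: take $k=2$, $g_1=\xi_{(1,1)}$, $g_2=\xi_\vt$; then $f=g_1g_2=\xi_\vt\notin\mathcal{C}_{I(B_n)}\setminus\{\xi_\vt\}$ even though $g_1\in\mathcal{C}_{I(B_n)}\setminus\{\xi_\vt\}$. (This defect is inherited from the paper, whose proof likewise dismisses the converse as ``straightforward.'' The implication becomes true, and your argument then works verbatim, under the additional hypothesis that no $g_i$ equals $\xi_\vt$; and only the forward implication of (3) is actually used later, e.g.\ in Corollary \ref{gen.two.const} and Theorem \ref{r3}.) A minor cosmetic point: a constant map fails to be injective on $B_n$ for every $n\ge 1$, not just $n\ge 2$, since $|B_n|=n^2+1\ge 2$.
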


\begin{proof}

(1) Straightforward.

(2) Suppose $g_i \ne \xi_\vt$ for all $i$. Then, either $f \in {\rm Aut}(B_n)$ or $f$ is a non-zero constant map; a contradiction. Hence the result.

(3) Let $f \in \mathcal{C}_{I(B_n)} \setminus \{\xi_\vt\}$. Suppose $g_i \not\in \mathcal{C}_{I(B_n)} \setminus \{\xi_\vt\}$ for all $i$. If $g_j = \xi_\vt$ for some $j$, then $f = \xi_\vt$; a contradiction. Thus, $g_i \in {\rm Aut}(B_n)$ for all $i$ so that $f \in {\rm Aut}(B_n)$; again a contradiction. Consequently, $g_j \in \mathcal{C}_{I(B_n)} \setminus \{\xi_\vt\}$ for some $j$. The verification for the converse part is straightforward.
\end{proof}


\begin{corollary}\label{gen.two.const}
Any generating subset of ${\rm End}(B_n)$ must contain the zero map and a nonzero constant map.
\end{corollary}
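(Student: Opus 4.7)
The plan is to apply parts (2) and (3) of Lemma \ref{gen-set-prop} directly, one for each of the two required kinds of maps. By Theorem \ref{End(B_n)}, the semigroup ${\rm End}(B_n)$ contains the zero constant map $\xi_\vt$ as well as nonzero constant maps such as $\xi_{(1,1)}$, and any generating set $U$ must be able to express each of these as a product of elements from $U$.

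First I would handle the zero map. Writing $\xi_\vt = g_1 g_2 \cdots g_k$ with each $g_i \in U$, Lemma \ref{gen-set-prop}(2) forces $g_j = \xi_\vt$ for some $j$, hence $\xi_\vt \in U$. Next I would repeat the argument for a fixed nonzero constant, say $\xi_{(1,1)} \in \mathcal{C}_{I(B_n)} \setminus \{\xi_\vt\}$. Expressing it as $h_1 h_2 \cdots h_m$ with $h_i \in U$, Lemma \ref{gen-set-prop}(3) guarantees that some factor $h_j$ lies in $\mathcal{C}_{I(B_n)} \setminus \{\xi_\vt\}$, so $U$ contains a nonzero constant map.

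There is no real obstacle here: the lemma has already done all the structural work, and the corollary is essentially a one-line consequence in each of the two cases. The only thing to be careful about is to invoke Theorem \ref{End(B_n)} to be sure that $\mathcal{C}_{I(B_n)} \setminus \{\xi_\vt\}$ is nonempty in ${\rm End}(B_n)$ (which holds since $n \ge 2$, giving at least $\xi_{(1,1)}$), so that a nonzero constant really does need to be generated.
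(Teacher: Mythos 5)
Your argument is correct and is exactly the intended one: the paper states this as an immediate corollary of Lemma \ref{gen-set-prop} without writing out a proof, and your application of parts (2) and (3) to factorizations of $\xi_\vt$ and $\xi_{(1,1)}$ over the generating set is precisely what is meant. (A minor aside: $\mathcal{C}_{I(B_n)}\setminus\{\xi_\vt\}$ is already nonempty for $n\ge 1$, so the restriction $n\ge 2$ is not needed for that point.)
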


\begin{theorem}\label{r2-endbn}
For $n \ge 3$, $r_2({\rm End}(B_n)) = 4$.
\end{theorem}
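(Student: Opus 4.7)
The plan is to prove both directions of the equality $r_2({\rm End}(B_n)) = 4$ separately. For the lower bound, I will argue that any generating set must simultaneously contain enough automorphisms to generate ${\rm Aut}(B_n)$ and the two required constant maps from Corollary \ref{gen.two.const}. For the upper bound, I will exhibit an explicit generating set of size $4$.

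For the lower bound $r_2({\rm End}(B_n)) \ge 4$, suppose $U$ generates ${\rm End}(B_n)$. By Corollary \ref{gen.two.const}, $U$ contains $\xi_\vt$ and at least one nonzero constant map. Next, because ${\rm Aut}(B_n) \cong S_n$ (Proposition \ref{sn-iso-autbn}) and Theorem \ref{End(B_n)} gives ${\rm End}(B_n) = {\rm Aut}(B_n) \cup \mathcal{C}_{I(B_n)}$, any element of ${\rm Aut}(B_n)$ expressed as a product of elements of $U$ must, by Lemma \ref{gen-set-prop}(1), be a product of automorphisms from $U$. Hence $U \cap {\rm Aut}(B_n)$ must generate the whole of ${\rm Aut}(B_n) \cong S_n$. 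Since $S_n$ is non-cyclic for $n \ge 3$, we need $|U \cap {\rm Aut}(B_n)| \ge 2$. As automorphisms and constant maps are disjoint classes, $|U| \ge 2 + 2 = 4$.

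For the upper bound, I will show that the set
\[
U = \{\phi_{(1\;2)},\; \phi_{(1\;2\cdots n)},\; \xi_\vt,\; \xi_{(1,1)}\}
\]
generates ${\rm End}(B_n)$. The two automorphisms correspond under Proposition \ref{sn-iso-autbn} to the standard generators $(1\;2)$ and $(1\;2\cdots n)$ of $S_n$, so together they generate all of ${\rm Aut}(B_n)$. The zero map $\xi_\vt$ lies in $U$ directly. For the remaining constant maps, a short computation shows that for every $\alpha \in B_n$ and every $\phi_\sigma \in {\rm Aut}(B_n)$,
\[
\xi_\alpha \phi_\sigma = \xi_{\alpha\phi_\sigma}.
\]
Applying this with $\alpha = (1,1)$ and choosing $\phi_\sigma$ such that $1\sigma = i$ (possible since $S_n$ acts transitively on $[n]$), we obtain every $\xi_{(i,i)}$ from $\xi_{(1,1)}$ and the automorphisms already in $\langle U \rangle$. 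Hence $\langle U \rangle \supseteq {\rm Aut}(B_n) \cup \mathcal{C}_{I(B_n)} = {\rm End}(B_n)$.

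No step here is a serious obstacle: the lower bound rests cleanly on Corollary \ref{gen.two.const} plus the non-cyclicity of $S_n$ for $n \ge 3$, and the upper bound reduces to checking the easy identity $\xi_\alpha \phi_\sigma = \xi_{\alpha\phi_\sigma}$ together with the transitivity of $S_n$ on $[n]$. The only point requiring a little care is observing that mixing a constant factor into a product involving an automorphism can never yield an automorphism, which is exactly what Lemma \ref{gen-set-prop}(1) delivers.
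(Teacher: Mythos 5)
Your proposal is correct and follows essentially the same route as the paper: the same explicit generating set $\{\phi_{(1\;2)}, \phi_{(1\;2\cdots n)}, \xi_{(1,1)}, \xi_\vt\}$ verified via the identity $\xi_{(i,i)} = \xi_{(1,1)}\phi_\sigma$, and the same lower-bound count combining Corollary \ref{gen.two.const} with the fact that at least two automorphisms are needed because $r_2(S_n)=2$ for $n\ge 3$. Your use of Lemma \ref{gen-set-prop}(1) to justify that $U\cap{\rm Aut}(B_n)$ must itself generate ${\rm Aut}(B_n)$ makes explicit a step the paper leaves implicit, but the argument is the same.
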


\begin{proof}
We show that the set $\mathcal{S} = \{\phi_{(1 \; 2)}, \phi_{(1 \; 2 \; \cdots \; n)}, \xi_{(1, 1)}, \xi_\vt \} $ of four elements is a generating set of the minimum cardinality in ${\rm End}(B_n)$ so that the result follows.
Let $f \in {\rm End}(B_n)$. By Theorem \ref{End(B_n)}, if $f \in {\rm Aut}(B_n)$, then $f = \phi_\sigma$ for some $\sigma \in S_n$ (cf. Proposition \ref{sn-iso-autbn}). Since $\sigma \in \langle (1 \; 2), (1 \; 2 \; \cdots \; n) \rangle$, we have $\phi_\sigma  \in \langle \phi_{(1 \; 2)}, \phi_{(1 \; 2 \; \cdots \; n)} \rangle$ so that $f \in \langle \mathcal{S} \rangle$. Thus, ${\rm Aut}(B_n) \subset \langle \mathcal{S} \rangle$.
If $f = \xi_{(i,i)}$, then for $\sigma \in S_n$ such that $1 \sigma = i$, we have $\xi_{(i, i)} = \xi_{(1, 1)} \phi_\sigma$. Consequently, $f \in \langle \mathcal{S} \rangle$ and $\mathcal{C}_{I(B_n)} \subset \langle \mathcal{S} \rangle$ so that ${\rm End}(B_n) = \langle \mathcal{S} \rangle$.

Let $V$ be any generating subset of ${\rm End}(B_n)$. For $n \ge 3$, we have $r_2(S_n) = 2$. Since ${\rm Aut}(B_n)$ is isomorphic to $S_n$ (cf. Proposition \ref{sn-iso-autbn}), $V$ must contain atleast two automorphisms over $B_n$ to generate ${\rm Aut}(B_n)$. By Corollary \ref{gen.two.const}, $V$ must contain atleast two more elements of ${\rm End}(B_n)$, the zero map and a nonzero constant map, to generate $\mathcal{C}_{I(B_n)}$. Thus, $|V| \ge 4$. Hence, $\mathcal{S}$ is a generating set of the minimum size.
\end{proof}

In the similar lines of the proof of the Theorem \ref{r2-endbn}, it is easy to prove that the set $\{\phi_{(1 \; 2)}, \xi_{(1, 1)}, \xi_\vt\}$ is a generating subset of the minimum cardinality in ${\rm End}(B_2)$. Hence, we have the following theorem.

\begin{theorem}
$r_2({\rm End}(B_2)) = 3.$
\end{theorem}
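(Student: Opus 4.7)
The plan is to mirror the structure of the proof of Theorem \ref{r2-endbn} but exploit the fact that $S_2$ is cyclic of order $2$, so $r_2(S_2) = 1$ rather than $2$, which drops the count by one.

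First I would show that $\mathcal{S} = \{\phi_{(1\;2)}, \xi_{(1,1)}, \xi_\vt\}$ generates ${\rm End}(B_2)$. By Theorem \ref{End(B_n)}, every element of ${\rm End}(B_2)$ lies in ${\rm Aut}(B_2) \cup \mathcal{C}_{I(B_2)}$. For the automorphism part, since $S_2 = \langle (1\;2) \rangle$ and the identity is recovered as $(1\;2)^2$, the isomorphism of Proposition \ref{sn-iso-autbn} gives ${\rm Aut}(B_2) = \langle \phi_{(1\;2)} \rangle \subseteq \langle \mathcal{S} \rangle$. For the constant part, $\xi_\vt \in \mathcal{S}$, $\xi_{(1,1)} \in \mathcal{S}$, and $\xi_{(2,2)} = \xi_{(1,1)} \phi_{(1\;2)} \in \langle \mathcal{S} \rangle$, so $\mathcal{C}_{I(B_2)} \subseteq \langle \mathcal{S} \rangle$.

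For the lower bound, let $V$ be any generating subset of ${\rm End}(B_2)$. By Corollary \ref{gen.two.const}, $V$ must contain at least two elements of $\mathcal{C}_{I(B_2)}$, namely $\xi_\vt$ and at least one nonzero constant map. Moreover, by part (1) of Lemma \ref{gen-set-prop}, any automorphism of $B_2$ can only be produced from a product of automorphisms, so $V$ must contain at least one element of ${\rm Aut}(B_2)$ (otherwise the nonidentity automorphism $\phi_{(1\;2)}$ could never be obtained). Hence $|V| \ge 3$, and combined with the explicit generating set $\mathcal{S}$ this gives $r_2({\rm End}(B_2)) = 3$.

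There is no real obstacle here; the only thing to be careful about is the verification that a single automorphism suffices to generate ${\rm Aut}(B_2)$ in the semigroup sense (that is, without access to inverses), which is immediate because $\phi_{(1\;2)}$ has order $2$.
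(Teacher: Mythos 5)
Your argument is correct and is exactly the route the paper intends: the paper simply remarks that the set $\{\phi_{(1\;2)}, \xi_{(1,1)}, \xi_\vt\}$ is a minimum generating set ``in the similar lines'' of Theorem \ref{r2-endbn}, and you have filled in precisely those details, with the lower bound coming from Corollary \ref{gen.two.const} plus the observation that at least one automorphism (indeed $\phi_{(1\;2)}$ itself, by Lemma \ref{gen-set-prop}(1)) must belong to any generating set. No gaps.
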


After ascertaining certain relevant properties of independent generating sets of ${\rm End}(B_n)$, we obtain the intermediate rank of the semigroup ${\rm End}(B_n)$.

\begin{lemma}\label{prop-ind-gen-set}
Let $U$ be an independent generating subset of ${\rm End}(B_n)$. Then $|U| \le n + 1.$
\end{lemma}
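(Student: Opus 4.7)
The plan is to split $U$ into its automorphism part and its constant part, bound each piece separately, and add. Write $U_1 = U \cap {\rm Aut}(B_n)$ and $U_2 = U \cap \mathcal{C}_{I(B_n)}$; by Theorem \ref{End(B_n)} these are disjoint and $U = U_1 \cup U_2$.

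For the bound on $U_1$, I would first argue that $U_1$ is itself an independent generating set of ${\rm Aut}(B_n)$. Independence is immediate: any dependence of $U_1$ inside ${\rm Aut}(B_n)$ would also be a dependence of $U$ inside ${\rm End}(B_n)$. For generation, pick any $\phi_\sigma \in {\rm Aut}(B_n)$; since $U$ generates ${\rm End}(B_n)$, write $\phi_\sigma = g_1 \cdots g_k$ with $g_i \in U$, and apply Lemma \ref{gen-set-prop}(1) to conclude that every $g_i$ lies in ${\rm Aut}(B_n)$, hence in $U_1$. Transporting through the isomorphism ${\rm Aut}(B_n) \cong S_n$ of Proposition \ref{sn-iso-autbn}, we obtain an independent generating set of $S_n$, and Theorem \ref{int. n upper S_n} gives $|U_1| \le n-1$.

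For the bound on $U_2$, I would show that $U_2$ contains exactly $\xi_\vt$ and at most one nonzero constant, so $|U_2| \le 2$. By Corollary \ref{gen.two.const} (together with Lemma \ref{gen-set-prop}(2)), $\xi_\vt$ must belong to $U$, hence to $U_2$. Now suppose for contradiction that $U_2$ contains two distinct nonzero constants $\xi_{(i,i)}$ and $\xi_{(j,j)}$. Choose $\sigma \in S_n$ with $i\sigma = j$; a direct computation (from the formula $(x)(\xi_\alpha \phi_\sigma) = \alpha\phi_\sigma$) gives $\xi_{(i,i)}\phi_\sigma = \xi_{(j,j)}$. Since $U_1$ generates ${\rm Aut}(B_n)$ by the previous paragraph, $\phi_\sigma$ is a product of elements of $U_1 \subseteq U \setminus \{\xi_{(j,j)}\}$. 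Hence $\xi_{(j,j)} \in \langle U \setminus \{\xi_{(j,j)}\}\rangle$, contradicting the independence of $U$. Therefore $|U_2| \le 2$.

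Adding the two bounds yields $|U| = |U_1| + |U_2| \le (n-1) + 2 = n+1$, as required. The only step that requires genuine care is the second one: isolating the exact mechanism ($\xi_{(i,i)}\phi_\sigma = \xi_{(j,j)}$) by which a second nonzero constant creates a dependence, and using that the automorphisms in $U$ already generate all of ${\rm Aut}(B_n)$ so that the required $\phi_\sigma$ is available without reusing $\xi_{(j,j)}$; the remaining arguments are bookkeeping via Lemma \ref{gen-set-prop} and Theorem \ref{int. n upper S_n}.
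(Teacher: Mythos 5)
Your proof is correct and follows essentially the same route as the paper: bound the automorphisms in $U$ by $n-1$ via Whiston's theorem (Theorem \ref{int. n upper S_n}) transported through Proposition \ref{sn-iso-autbn}, and show $U$ contains $\xi_\vt$ plus at most one nonzero constant using exactly the relation $\xi_{(i,i)}\phi_\sigma = \xi_{(j,j)}$. Your version is slightly more explicit than the paper's in verifying that the needed $\phi_\sigma$ is generated by the automorphisms of $U$ alone (via Lemma \ref{gen-set-prop}(1)), but the argument is the same.
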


\begin{proof}
Since $U$ is a generating subset of ${\rm End}(B_n)$, by Corollary \ref{gen.two.const}, $U$ must contain two constant maps of ${\rm End}(B_n)$ viz. the zero map and exactly one nonzero constant map. For $i, j \in [n]$  with $i \ne j$, let if possible, $f = \xi_{(i, i)}$ and $g = \xi_{(j, j)} \in U$. Then, for $\sigma \in S_n$ such that $j \sigma = i$, note that $\xi_{(i, i)} = \xi_{(j, j)}\phi_\sigma.$ Since $\phi_\sigma \in \langle U \setminus \{f\} \rangle$, we have $f \in \langle U \setminus \{f\} \rangle$; a contradiction to $U$ is an independent set. Since ${\rm Aut}(B_n) \subset {\rm End}(B_n)$ and ${\rm Aut}(B_n) \simeq S_n$, by Theorem \ref{int. n upper S_n}, $U$ can contain atmost $n-1$ automorphisms. Hence, $|U| \le n + 1$.
\end{proof}

%
%
%
%

\begin{theorem}\label{r3}
For $n \ge 2$, $r_3({\rm End}(B_n)) = n + 1$.
\end{theorem}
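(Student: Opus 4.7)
The plan is to combine the upper bound $|U|\le n+1$ already supplied by Lemma \ref{prop-ind-gen-set} with an explicit independent generating set realizing this size. I would propose
\[ U = \{\phi_{(1\;2)},\phi_{(2\;3)},\ldots,\phi_{(n-1\;n)},\xi_{(1,1)},\xi_\vt\}, \]
of cardinality $n+1$, assembled from the $n-1$ adjacent-transposition automorphisms (which, as the introduction recalls, form an independent generating set of ${\rm Aut}(B_n)\simeq S_n$) together with the two unavoidable constant maps singled out by Corollary \ref{gen.two.const}. For $n=2$ the set collapses to $\{\phi_{(1\;2)},\xi_{(1,1)},\xi_\vt\}$ of size $3=n+1$, and the same argument will apply.

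Next, I would verify $\langle U\rangle={\rm End}(B_n)$ by repeating the step from Theorem \ref{r2-endbn}: the $\phi$'s recover all of ${\rm Aut}(B_n)$ because the adjacent transpositions generate $S_n$, and for any $i\in[n]$ one gets $\xi_{(i,i)}=\xi_{(1,1)}\phi_\sigma$ by choosing $\sigma\in S_n$ with $1\sigma=i$. Together with $\xi_\vt\in U$ and Theorem \ref{End(B_n)}, this covers ${\rm End}(B_n)$.

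The substance of the argument is a three-case independence check driven entirely by Lemma \ref{gen-set-prop}. Removing $\xi_\vt$ is handled by part (2): any product equal to $\xi_\vt$ must contain $\xi_\vt$ as a factor, and none remain. Removing $\xi_{(1,1)}$ is handled by part (3): any product equal to a nonzero constant map must contain such a map as a factor, again unavailable. Removing a transposition automorphism $\phi_{(i\;i+1)}$ is handled by part (1): any product equal to an automorphism must consist entirely of automorphism factors, so the question reduces, via the isomorphism of Proposition \ref{sn-iso-autbn}, to whether $(i\;i+1)$ lies in the subgroup of $S_n$ generated by the remaining adjacent transpositions, which it does not, by the independence of the Coxeter generators recalled in the introduction.

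The only place where a subtlety could hide is this third subcase, as one must first apply Lemma \ref{gen-set-prop}(1) to purge the constant maps from any factorisation before translating the claim into the group $S_n$; once this is done the rest is mechanical. Combining independence of $U$ with the upper bound $|U|\le n+1$ from Lemma \ref{prop-ind-gen-set} then yields $r_3({\rm End}(B_n))=n+1$.
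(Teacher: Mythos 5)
Your proposal is correct and follows essentially the same route as the paper: the same set $\{\phi_{(1\;2)},\ldots,\phi_{(n-1\;n)},\xi_{(1,1)},\xi_\vt\}$, the same generation argument, the upper bound from Lemma \ref{prop-ind-gen-set}, and the independence check via Lemma \ref{gen-set-prop} and the independence of the adjacent transpositions in $S_n$. Your explicit remark that Lemma \ref{gen-set-prop}(1) is needed to reduce the automorphism subcase to a statement inside $S_n$ is a slightly more careful articulation of a step the paper leaves implicit.
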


\begin{proof}
In view of Lemma \ref{prop-ind-gen-set}, we show that the set \[\mathcal{T} = \{\phi_{(1 \; 2)}, \phi_{(2 \; 3)}, \ldots, \phi_{(n-1 \; n)}\} \cup \{\xi_{(1, 1)}, \xi_{\vt}\}\] of size $n + 1$, is an independent generating set so that the result follows.

Let $f \in {\rm End}(B_n)$. If $f \in {\rm Aut}(B_n)$, then $f = \phi_\sigma$ for some $\sigma \in S_n$ (cf. Proposition \ref{sn-iso-autbn}). Since $\sigma \in \langle (1 \; 2), (2 \; 3), \ldots,(n-1 \; n) \rangle $, we have $\phi_\sigma \in \langle \phi_{(1 \; 2)}, \phi_{(2 \; 3)}, \ldots, \phi_{(n-1 \; n)} \rangle$ so that ${\rm Aut}(B_n) \subset \langle T \rangle$. If $f = \xi_{(i, i)} (i \ne 1)$, then for $\sigma \in S_n$ such that $1\sigma = i$, we have $\xi_{(i, i)} = \xi_{(1, 1)}\phi_\sigma$ so that $\mathcal{C}_{I(B_n)} \subset \langle \mathcal{T} \rangle$. Consequently, ${\rm End}(B_n) = \langle \mathcal{T} \rangle$. Since the set $\{(1 \; 2), (2 \; 3), \ldots,(n-1 \; n)\}$ is an independent set in $S_n$, by Proposition \ref{sn-iso-autbn}, the set $\{ \phi_{(1 \; 2)}, \phi_{(2 \; 3)}, \ldots, \phi_{(n-1 \; n)} \}$ is an independent set in $\mathcal{T}$. For $f = \xi_{(1,1)}$ or $\xi_{\vt}$, by Lemma \ref{gen-set-prop}, we have $f \not\in \langle \mathcal{T} \setminus \{f\}\rangle$. Hence the result.
\end{proof}

Though Theorem \ref{r3} gives us a lower bound for upper rank of the semigroup ${\rm End}(B_n)$, in the following theorem we provide a better lower bound for $r_4({\rm End}(B_n))$.

\begin{theorem}
For $n \ge 2$, we have $r_4({\rm End}(B_n)) \ge n + 2$.
\end{theorem}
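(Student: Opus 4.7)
The plan is to exhibit an explicit independent subset of ${\rm End}(B_n)$ of size $n+2$. Since Lemma \ref{prop-ind-gen-set} forces every independent \emph{generating} set to have at most $n+1$ elements, the set produced here cannot generate ${\rm End}(B_n)$; the task is to choose it so that its generated subsemigroup is small enough to exclude any single chosen element yet still remain inside the set itself.

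My candidate is
\[
\mathcal{U} \;=\; \{\phi_{\iota}\} \;\cup\; \{\xi_\vt\} \;\cup\; \{\xi_{(i,i)} : i \in [n]\},
\]
where $\iota \in S_n$ denotes the identity permutation; this is a collection of $n+2$ pairwise distinct elements. The motivation for replacing the transpositions used in the $r_3$-construction of Theorem \ref{r3} by $\phi_\iota$ is that with $H := \langle \phi_\iota \rangle = \{\phi_\iota\}$, each singleton $\{i\}$ is its own $H$-orbit, so that \emph{all} $n$ nonzero constant maps become mutually independent rather than collapsing under the action of a larger $\langle A\rangle$. The price paid is that $\mathcal{U}$ no longer generates ${\rm Aut}(B_n)$, but independence — not generation — is the target.

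The verification rests on the elementary composition identities
\[
\phi_{\iota}\,\xi_{\alpha} = \xi_{\alpha} = \xi_{\alpha}\,\phi_{\iota} \quad (\alpha \in I(B_n)), \qquad \xi_{\alpha}\,\xi_{\beta} = \xi_{\beta} \quad (\alpha,\beta \in I(B_n)),
\]
together with $\phi_{\iota}\phi_{\iota} = \phi_{\iota}$. From these it follows by a straightforward induction on length that every nonempty product of elements of $\mathcal{U}$ either equals $\phi_\iota$ (when every factor is $\phi_\iota$) or equals the rightmost constant factor appearing in it; in particular, $\langle \mathcal{U} \rangle = \mathcal{U}$. Therefore, for each $v \in \mathcal{U}$ the subsemigroup $\langle \mathcal{U}\setminus\{v\}\rangle$ equals $\mathcal{U}\setminus\{v\}$, which does not contain $v$. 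This proves independence of $\mathcal{U}$, giving $r_4({\rm End}(B_n)) \ge n+2$.

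No real obstacle is anticipated: the entire content is producing the right set and verifying the composition identities above (for which Lemma \ref{gen-set-prop}(2) already handles the $\xi_\vt$ case). One might further conjecture that $n+2$ is in fact the exact value of $r_4({\rm End}(B_n))$; establishing such a matching upper bound would require a careful optimisation over all independent subsets $A \subseteq {\rm Aut}(B_n)$, maximising $|A|$ plus the number of orbits of $\langle A\rangle$ on $[n]$, but that is beyond the scope of the present statement.
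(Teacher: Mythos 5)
Your proposal is correct and uses exactly the same witness set as the paper, namely $\{\phi_{id}\}\cup\mathcal{C}_{I(B_n)}$, shown to be independent. Your verification via the composition identities (so that $\langle\mathcal{U}\rangle=\mathcal{U}$ and hence $\langle\mathcal{U}\setminus\{v\}\rangle=\mathcal{U}\setminus\{v\}$) is in fact more explicit than the paper's, which merely asserts that no product of the remaining elements can equal $\xi_{(i,i)}$.
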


\begin{proof}
We prove that the set $U = \{\phi_{id}\} \cup \mathcal{C}_{I(B_n)}$ is an independent subset of ${\rm End}(B_n)$. Let $f \in U$. Since a permutation can not be generated from non-permutations, thus if $f = \phi_{id}$, then clearly $f \not\in \langle U \setminus \{f\} \rangle$. If $f = \xi_{\vt}$, then by Lemma \ref{gen-set-prop}(2), $f \not\in \langle U \setminus \{f\} \rangle$. We may now suppose that $f = \xi_{(i, i)}$. Note that for $f_j (1 \le j \le s) \in U \setminus \{f\}$, we have $\xi_{(i, i)} \ne f_1 f_2 \cdots f_s$. Thus, $f \not\in \langle U \setminus \{f\} \rangle$. Consequently, $U$ is an independent subset of size $n + 2$ so that $r_4({\rm End}(B_n)) \ge n + 2$.
\end{proof}


Based on our observations, we give the following conjecture for the upper rank of  ${\rm End}(B_n)$.

\begin{conjecture}
For $n \ge 2$, we have $r_4({\rm End}(B_n)) = n + 2.$
\end{conjecture}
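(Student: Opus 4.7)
The lower bound $r_4({\rm End}(B_n)) \ge n+2$ is already in hand, so the task is to show that every independent subset $U \subseteq {\rm End}(B_n)$ satisfies $|U| \le n+2$. I would split $U = U_1 \cup U_2$ with $U_1 = U \cap {\rm Aut}(B_n)$ and $U_2 = U \cap \mathcal{C}_{I(B_n)}$, and set $r = |U_1|$, $s$ equal to the number of nonzero constants in $U_2$, and $\epsilon \in \{0,1\}$ indicating whether $\xi_\vt \in U$, so that $|U| = r + s + \epsilon$. The key structural observation comes from the composition identities $h\,\xi_\alpha = \xi_\alpha$ and $\xi_\alpha \phi_\sigma = \xi_{\alpha\phi_\sigma}$ in ${\rm End}(B_n)$: any product of elements of $U$ that involves a nonzero constant collapses to a word of the shape $\xi_{(j,j)} \phi_\tau$, where $\xi_{(j,j)}$ is the last nonzero constant to appear in the product and $\phi_\tau$ is the product of the subsequent automorphisms. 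Combining this with Lemma \ref{gen-set-prop}, one concludes that $\xi_{(i,i)} \in \langle U \setminus \{\xi_{(i,i)}\}\rangle$ if and only if some other $\xi_{(j,j)} \in U$ has $j$ in the same orbit as $i$ under $H := \langle U_1 \rangle$, where $H$ is identified with a subgroup of $S_n$ via Proposition \ref{sn-iso-autbn}. Independence of $U$ therefore forces $s \le k$, where $k$ denotes the number of orbits of $H$ on $[n]$.

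A short case analysis dispatches the easy scenarios. If $U_1 = \vn$, then $|U| = s + \epsilon \le n+1$. If $U_1 = \{\phi_{id}\}$, then $H$ is trivial, $k = n$, and $|U| \le 1 + n + 1 = n + 2$, matching the extremum achieved by the lower bound example. Otherwise $r \ge 1$ and $U_1 \ne \{\phi_{id}\}$, which forces $\phi_{id} \notin U_1$: in a finite group the identity already belongs to the subgroup generated by any non-empty set, so retaining $\phi_{id}$ alongside any further element would violate independence. Through Proposition \ref{sn-iso-autbn}, $U_1$ corresponds in this remaining case to an independent subset of $S_n$ in the usual group-theoretic sense, and the goal reduces to bounding $r + s + \epsilon$ there.

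The main obstacle, and the real content of the conjecture, is an ``intransitive Whiston'' estimate: for every nonempty group-theoretically independent subset $V \subseteq S_n$ whose generated subgroup $H$ has $k$ orbits on $[n]$, one has $|V| + k \le n$. Granted this inequality, $|U| \le r + k + \epsilon \le n + 1$ in the remaining case, which together with the easy cases yields $|U| \le n+2$ overall. The case $k = 1$ is precisely Theorem \ref{int. n upper S_n}. For $k \ge 2$ the natural plan is to use the embedding $H \hookrightarrow S_{m_1} \times \cdots \times S_{m_k}$ (where $m_1, \ldots, m_k$ are the orbit sizes) together with the monotonicity of the independence number under inclusion, reducing matters to the assertion that every independent subset of the direct product $S_{m_1} \times \cdots \times S_{m_k}$ has size at most $\sum_{i=1}^k (m_i - 1) = n - k$. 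The genuinely delicate point is this product inequality: the projection of an independent subset of such a product onto a single factor need not remain independent, because ``diagonal'' elements can entangle the coordinates, so a naive coordinate-wise induction fails. Controlling this entanglement, presumably through a replacement argument along the subnormal series of kernels of the projections to individual orbits, is the combinatorial core of the conjecture.
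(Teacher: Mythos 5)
First, note that the paper offers no proof of this statement: it is stated explicitly as a conjecture and left open, so there is nothing to compare your argument against except the conjecture's own difficulty. Your semigroup-theoretic reduction is, as far as it goes, correct and well executed: the identities $h\xi_\alpha=\xi_\alpha$ and $\xi_\alpha\phi_\sigma=\xi_{\alpha\phi_\sigma}$ do collapse any word involving a constant to $\xi_\beta\phi_\tau$ with $\beta$ the last constant used, Lemma \ref{gen-set-prop} correctly isolates the three layers, the bound $s\le k$ (one nonzero constant per orbit of $H=\langle U_1\rangle$) is right, and the cases $U_1=\varnothing$ and $U_1=\{\phi_{id}\}$ are handled correctly, the latter recovering the extremal example of size $n+2$. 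You have in effect shown that the conjecture is \emph{equivalent} to the group-theoretic assertion that every independent subset $V\subseteq S_n\setminus\{id\}$ whose generated subgroup has $k$ orbits satisfies $|V|\le n-k$.

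That equivalence, however, is where the proof stops, and the remaining inequality is precisely the open content of the conjecture rather than a dispatchable technicality. For $k=1$ it is Whiston's theorem, which is itself a substantial result; for $k\ge 2$ you reduce to bounding independent sets in $S_{m_1}\times\cdots\times S_{m_k}$ by $\sum(m_i-1)$, but the subadditivity of the independence number under direct products is not known to hold for general finite groups, and you yourself observe that projections of independent sets need not stay independent because of diagonal entanglement. The ``replacement argument along the subnormal series of kernels of the projections'' is named but not carried out, and nothing in the proposal rules out, say, an independent set in a diagonally embedded or otherwise entangled intransitive subgroup exceeding $n-k$. So the proposal is a correct and genuinely useful reformulation, but it does not constitute a proof: the combinatorial core — the intransitive analogue of Whiston's theorem — remains unproven.
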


We leave the proof, or a refutation of this conjecture as an open question. \\

To find the large rank of the semigroup, we adopt the technique introduced in \cite{a.jk14-2}. The technique relies on the concept of prime subsets of semigroups.  A nonempty subset $U$ of a semigroup $\G$ is said to be \emph{prime} if, for all $a, b \in \G$, \[ab \in U \mbox{ implies }  a \in U  \mbox{ or } b \in U.\]

\begin{theorem}[{\cite[Corollary 2.3]{a.jk14-2}}]\label{gm-lsgp}
If $V$ is a smallest prime subset of a finite semigroup $\G$, then $r_5(\G) = |\G| - |V| + 1$.
\end{theorem}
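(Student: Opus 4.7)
The plan is to establish the equality by proving the two matching bounds $r_5(\G) \ge |\G| - |V| + 1$ and $r_5(\G) \le |\G| - |V| + 1$. Both will rest on a single structural duality that I would set up first: for a nonempty proper subset $W$ of $\G$, the set $W$ is prime if and only if its complement $\G \setminus W$ is closed under the semigroup operation, i.e., $\G \setminus W$ is a subsemigroup of $\G$. This is simply the contrapositive of the defining implication $ab \in W \Rightarrow a \in W \text{ or } b \in W$, which says literally that $a, b \in \G \setminus W$ implies $ab \in \G \setminus W$.

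For the lower bound I would apply this duality directly to $V$: since $V$ is prime, $\G \setminus V$ is a subsemigroup, so $\langle \G \setminus V \rangle = \G \setminus V \ne \G$. This exhibits a non-generating subset of cardinality $|\G| - |V|$, ruling out any value of $r_5(\G)$ below $|\G| - |V| + 1$.

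For the upper bound I would argue contrapositively. Suppose $U \subseteq \G$ fails to generate $\G$, and set $T = \langle U \rangle$, which is a proper subsemigroup of $\G$. The duality then tells us that $\G \setminus T$ is a nonempty prime subset of $\G$, and the minimality of $V$ forces $|V| \le |\G \setminus T|$. Rearranging, $|U| \le |T| \le |\G| - |V|$. Hence every subset of cardinality $|\G| - |V| + 1$ must already generate $\G$, giving $r_5(\G) \le |\G| - |V| + 1$.

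I do not anticipate any serious obstacle: once the prime/subsemigroup duality is spelled out, both bounds collapse to a one-line cardinality count. The only mild point of care is to confirm that $\G \setminus T$ is nonempty at the moment the minimality of $V$ is invoked, but this is guaranteed by $T$ being proper, which is precisely the hypothesis of the contrapositive.
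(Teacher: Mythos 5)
Your proof is correct. The paper itself imports this statement from \cite{a.jk14-2} without reproducing a proof, and your argument via the duality ``$W$ is prime iff $\G\setminus W$ is closed under the operation'' is exactly the standard argument underlying that cited result, with both bounds handled cleanly (including the nonemptiness checks).
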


In view of Lemma $\ref{gen-set-prop}(2)$, note that the set $\{\xi_\vt\}$ is a smallest prime subset of ${\rm End}(B_n)$. Since $|{\rm End}(B_n)| = n! + n + 1$ (cf. Proposition \ref{sn-iso-autbn} and Theorem \ref{End(B_n)}), we have the large rank of the semigroup ${\rm End}(B_n)$ in the following theorem.

\begin{theorem}
For $n \ge 2$, $r_5({\rm End}(B_n)) = n! + n +1$.
\end{theorem}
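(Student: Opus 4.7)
The plan is to invoke Theorem \ref{gm-lsgp} directly, so the work reduces to exhibiting a smallest prime subset of ${\rm End}(B_n)$ and then counting $|{\rm End}(B_n)|$.

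First I would observe that $\{\xi_\vt\}$ is a prime subset of ${\rm End}(B_n)$. Indeed, if $fg = \xi_\vt$ for some $f,g \in {\rm End}(B_n)$, then by Lemma \ref{gen-set-prop}(2) applied to the product $g_1 g_2 = fg$, at least one of $f,g$ must equal $\xi_\vt$. Since any prime subset must be nonempty and therefore have cardinality at least $1$, the singleton $\{\xi_\vt\}$ is automatically a smallest prime subset.

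Next I would compute $|{\rm End}(B_n)|$. By Theorem \ref{End(B_n)}, ${\rm End}(B_n) = {\rm Aut}(B_n) \cup \mathcal{C}_{I(B_n)}$, and this union is disjoint. Proposition \ref{sn-iso-autbn} gives $|{\rm Aut}(B_n)| = |S_n| = n!$, while the idempotent set $I(B_n) = \{(i,i) : i \in [n]\} \cup \{\vt\}$ has $n+1$ elements, so $|\mathcal{C}_{I(B_n)}| = n+1$. Hence $|{\rm End}(B_n)| = n! + n + 1$.

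Finally, applying Theorem \ref{gm-lsgp} with $V = \{\xi_\vt\}$ yields
\[
r_5({\rm End}(B_n)) = |{\rm End}(B_n)| - |V| + 1 = (n! + n + 1) - 1 + 1 = n! + n + 1,
\]
as desired. There is no genuine obstacle here; the only thing to be careful about is verifying the primality condition from the correct direction in Lemma \ref{gen-set-prop}(2), and noting that a singleton is vacuously smallest among all nonempty prime subsets.
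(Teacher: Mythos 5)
Your proof is correct and follows essentially the same route as the paper: identify $\{\xi_\vt\}$ as a smallest prime subset via Lemma \ref{gen-set-prop}(2), count $|{\rm End}(B_n)| = n! + n + 1$ from Proposition \ref{sn-iso-autbn} and Theorem \ref{End(B_n)}, and apply Theorem \ref{gm-lsgp}. Your write-up is in fact slightly more explicit than the paper's (spelling out the disjointness of the union and why a singleton prime subset is automatically smallest), but the argument is the same.
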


\section*{Acknowledgement}
The author cordially thanks Professor M. V. Volkov and the anonymous referee for their helpful comments/suggestions on an earlier version of this paper. The author wishes to acknowledge the support of research initiation grant [0076/2016] funded by BITS Pilani, Pilani.

\bibliographystyle{abbrv}

\begin{thebibliography}{10}

\bibitem{a.camron02}
P.~J. Cameron and P.~Cara.
\newblock Independent generating sets and geometries for symmetric groups.
\newblock {\em J. Algebra}, 258(2):641--650, 2002.

\bibitem{a.howie87}
G.~M.~S. Gomes and J.~M. Howie.
\newblock On the ranks of certain finite semigroups of transformations.
\newblock {\em Math. Proc. Cambridge Philos. Soc.}, 101(3):395--403, 1987.

\bibitem{a.howie92}
G.~M.~S. Gomes and J.~M. Howie.
\newblock On the ranks of certain semigroups of order-preserving
  transformations.
\newblock {\em Semigroup Forum}, 45(3):272--282, 1992.

\bibitem{b.Howie95}
J.~M. Howie.
\newblock {\em Fundamentals of semigroup theory}, volume~12 of {\em London
  Mathematical Society Monographs}.
\newblock Oxford University Press, New York, 1995.

\bibitem{a.Howie99}
J.~M. Howie and M.~I.~M. Ribeiro.
\newblock Rank properties in finite semigroups.
\newblock {\em Comm. Algebra}, 27(11):5333--5347, 1999.

\bibitem{a.Howie00}
J.~M. Howie and M.~I.~M. Ribeiro.
\newblock Rank properties in finite semigroups. {II}. {T}he small rank and the
  large rank.
\newblock {\em Southeast Asian Bull. Math.}, 24(2):231--237, 2000.

\bibitem{a.jk14-1}
J.~Kumar and K.~V. Krishna.
\newblock Affine near-semirings over {B}randt semigroups.
\newblock {\em Comm. Algebra}, 42(12):5152--5169, 2014.

\bibitem{a.jk14-2}
J.~Kumar and K.~V. Krishna.
\newblock The large rank of a finite semigroup using prime subsets.
\newblock {\em Semigroup Forum}, 89(2):403--408, 2014.

\bibitem{a.jk16-2}
J.~Kumar and K.~V. Krishna.
\newblock Rank properties of the semigroup reducts of affine near-semirings
  over {B}randt semigroups.
\newblock {\em Semigroup Forum}, 93(3):516--534, 2016.

\bibitem{a.marczewski66}
E.~Marczewski.
\newblock Independence in abstract algebras. {R}esults and problems.
\newblock {\em Colloq. Math.}, 14:169--188, 1966.

\bibitem{t.jdm02}
J.~D. Mitchell.
\newblock {\em Extremal Problems in Combinatorial Semigroup Theory}.
\newblock PhD thesis, {U}niversity of {S}t {A}ndrews, 2002.

\bibitem{a.ruskuc94}
N.~Ru{\v{s}}kuc.
\newblock On the rank of completely {$0$}-simple semigroups.
\newblock {\em Math. Proc. Cambridge Philos. Soc.}, 116(2):325--338, 1994.

\bibitem{a.whiston00}
J.~Whiston.
\newblock Maximal independent generating sets of the symmetric group.
\newblock {\em J. Algebra}, 232(1):255--268, 2000.

\bibitem{a.ping11}
P.~Zhao.
\newblock On the ranks of certain semigroups of orientation preserving
  transformations.
\newblock {\em Comm. Algebra}, 39(11):4195--4205, 2011.

\end{thebibliography}

\end{document}